\newcommand{\phip}{\phi_{P^o} }
\newcommand{\szego}{Szeg\"o }
\newcommand{\T}{{\mathbf T}^m}
\newcommand{\kahler}{K\"ahler }
\newcommand{\PP}{{\mathbb P}}
\newcommand{\QQ}{\mathbb{Q}}
\newcommand{\R}{{\mathbb R}}
\newcommand{\C}{{\mathbb C}}
\newcommand{\Z}{{\mathbb Z}}
\newcommand{\N}{{\mathbb N}}
\newcommand{\CP}{\C\PP}
\newcommand{\dbar}{\bar\partial}
\newcommand{\ddbar}{\partial\dbar}
\newcommand{\half}{{\frac{1}{2}}}
\newcommand{\dcal}{\mathcal{D}}
\newcommand{\fcal}{\mathcal{F}}
\newcommand{\hcal}{\mathcal{H}}
\newcommand{\mcal}{\mathcal{M}}
\newcommand{\pcal}{\mathcal{P}}
\newcommand{\ocal}{\mathcal{O}}
\def    \half   {{\frac{1}{2}}}
\def    \Z  {{\mathbb Z}}
\def    \R  {{\mathbb R}}
\def    \C  {{\mathbb C}}
 \def   \half   {{\frac{1}{2}}}
 \def    \Re     {{\operatorname{Re}}}
\newtheorem{maintheo}{{\sc Theorem}}
\newtheorem{theo}{{\sc Theorem}}[section]
\newtheorem{lem}[theo]{{\sc Lemma}}
\newtheorem{prop}[theo]{{\sc Proposition}}
\newenvironment{rem}{\medskip\noindent{\it Remark:\/} }{\medskip}
\newenvironment{defin-no-number}{\medskip\noindent{\it Definition:\/} }{\medskip}
\def\text{\textstyle}
\def\max{{\operatorname{max}}}
\title[
Off-diagonal decay of toric Bergman kernels
]
{Off-diagonal decay of toric Bergman kernels
}
\author{Steve Zelditch }
\address{Department of Mathematics, Northwestern  University,
Evanston, IL 60208, USA} \email{ zelditch@math.northwestern.edu}
\thanks{Research partially supported by NSF grant DMS--1541126 }
\begin{document}

\maketitle
\begin{abstract}  We study the off-diagonal decay of Bergman kernels
$\Pi_{h^k}(z,w)$  and Berezin kernels $P_{h^k}(z,w)$ 
for ample invariant line bundles over compact toric projective \kahler manifolds of dimension $m$.
When the metric is real analytic, $P_{h^k}(z,w) \simeq k^m \exp - k D(z,w)$ 
where $D(z,w)$ is the diastasis. When the metric is only $C^{\infty}$ this
asymptotic cannot hold for all $(z,w)$ since the diastasis is not even defined for all $(z,w)$ close to the diagonal. Our main
result is that for  general $C^{\infty}$ metrics, $P_{h^k}(z,w) \simeq k^m \exp - k D(z,w)$ as long as $w$ lies on the $\R_+^m$-orbit of $z$, and
for general $(z,w)$, $\limsup_{k \to \infty} \frac{1}{k} \log P_{h^k}(z,w)
 \leq  - D(z^*,w^*)$ where $D(z, w^*)$ is the diastasis between $z$ and the translate of $w$  by  $(S^1)^m$ to the $\R_+^m$ orbit of $z$. These results
 are complementary to Mike Christ's negative results showing that 
 $P_{h^k}(z,w) $ does not have off-diagonal decay at ``speed'' k if $(z,w)$
 lie on the same $(S^1)^m$-orbit.
\end{abstract}

The problem we are concerned with in this note  is
to find conditions on a positive Hermitian line bundle $(L, h) \to (M, \omega)$
over a K\"ahler manifold so that the Szeg\"o kernel $\Pi_{h^k}(z,w)$  for $H^0(M, L^k)$
has exponential decay at speed $k$. 
We denote the 
  {\it Berezin kernel} or  {\it normalized \szego kernel} by
\begin{equation}\label{PN} P_{h^k}(z,w):=
\frac{|\Pi_{h^k}(z,w)|}{\Pi_{h^k}(z,z)^\frac 12 \Pi_{h^k}(w,w)^\frac
12}\;.\end{equation}

 \bigskip
 
 \noindent{\bf Problem}  {\it Let $  D_h^*(z,w)$ be  the upper semi-continuous regularization of
 \begin{equation} \label{LIMSUP} \limsup_{k \to \infty} \frac{1}{k} (- \log P_{h_k}(z,w)). \end{equation}
 Determine $D_h^*(z,w)$ and in particular determine when it is non-zero. }
 \bigskip
 
 The minus sign is due to the fact that \eqref{PN} is pluri-superharmonic in $z$ and we prefer to deal with pluri-subharmonic functions.
 It is known that for real analytic metrics, $P_{h^k}(z,w) \leq C e^{- k D(z,w)}$ for points $(z,w)$ sufficiently close to the diagonal,
 where $D(z,w)$ is the so-called Calabi diastasis (\S\ref{CD}). Near the diagonal, $D(z,w) \sim |z - w|^2$.  For general smooth metrics,
 the sharpest general result is that $P_{h^k}(z,w)\leq C e^{- A\sqrt{k} \log k }$ for all $A < \infty$ \cite{Chr13, Chr13B}.
This raises the question of whether, for $C^{\infty}$ but not real analytic metrics, $D_h^*(z,w)$ can be strictly negative off
the diagonal.

A stronger condition which arises in several problems (see \cite{RZ})  is whether there exists a pointwise limit
\begin{equation}\label{PTWISE} \frac{1}{k} \log P_{h^k}(z,w) \to - D(z,w) \end{equation}
for some function $D(z,w)$ defined near the diagonal in $M \times \bar{M}$.
If the metric is real analytic, then such a limit does exist and  $D(z,w)$ is
the Calabi diastasis of the metric (see \S \ref{CD}).  The diastasis
is the real  part of the off-diagonal analytic continuation of a local Kaehler potential
of $\omega$ \cite{Ca53}.
Existence of a pointwise limit near the diagonal would be surprising if the metric is $C^{\infty}$ but not real analytic, since it would define a Calabi diastasis even
though the Kaehler potential admits no analytic continuation. One might therefore
expect the neighborhood of $z$ in which the limit \eqref{PTWISE} exists to be
the largest neighborhood of $z$ in which the Kaehler potential $\phi$ admits an analytic
 continuation.

In this note we study these questions in the case of a positive Hermitian holomorphic toric line bundle $(L, h) \to (M, \omega_h)$
with $C^{\infty}$ metric $h$. As recalled in \S \ref{TORSECT}, a toric \kahler manifold is a
\kahler  manifold  on which the complex torus
$(\C^*)^m$ acts holomorphically with an open orbit $M^o$. We denote by $\T$ the
underlying real torus and by $\R_+^m$ the real subgroup of $(\C^*)^m$. 
We denote a point by  $z = e^{\rho/2 +i\varphi} m_0$ where $e^{\rho/2}$ denotes the $\R_+^m$
action and $e^{i \varphi} $ denotes the $\T$ action. Let $h = e^{-\phi}$ in a toric holomorphic frame over $M^o$.
As recalled in \S \ref{TCD},  $\phi(e^{\rho/2}) = \tilde{\phi}(\rho)$
on the open orbit, where $\tilde{\phi}$ is convex.

Given two points $z = e^{\rho_1/2 + i \theta_1}, w = e^{\rho_2 + i \theta_2}$ we denote
by $z^* = e^{\rho_1/2},$ resp. $ w^* = e^{\rho_2/2}$ the unique  point on the $\R_+^m$ orbit of $m_0$
which lie on the same $\T$ orbit as $z$, resp. $w$. 
Our main result is that  $D^*_h(z,w) \leq - D(z^*, w^*) $ where $D(z,w)$ 
is the Calabi diastasis  (see \S \ref{CD} and \S \ref{TCD}).





\begin{maintheo} \label{<}Let $(L, h) \to (M, \omega)$ be a positive Hermitian toric line bundle over a toric
Kaehler manifold. Then if $z, w \in M^o$,
$$\limsup_{k \to \infty} \frac{1}{k} \log P_{h^k}(z,w)
 \leq  - D(z^*,w^*) \leq 0, $$
 with $D(z^*,w^*)  = 0$  if and only if $z^* = w^*$.
 Furthermore, if 
$z = e^{\rho_1/2 + i \theta}, w = e^{\rho_2/2 + i \theta}$ lie on the same $\R_+^m$ orbit, then one has the pointwise limit
\eqref{PTWISE}, 
$$\lim_{k \to \infty} \frac{1}{k} \log P_{h^k}(z,w) = - D(z,w)
  $$
 The same asymptotics and upper bounds are valid for all $z,w \in M$
 in the domain of $D(z,w).$

\end{maintheo}\label{MAIN}

Thus, $\limsup_{k \to \infty} \frac{1}{k} \log P_{h^k}(z,w)
 < 0$ except on the real codimenson  $m$ subset 
 $\mcal: = \{(z,w) \in M \times M: \exists \vec \theta: e^{i \vec \theta} w = z\}$ of $(z,w)$ which lie on the same $\T$ orbit, i.e.
 have the same $\rho$ coordinates. We say  that $P_{h^k}(z,w)$ has
 exponential decay at speed $k$ except on $\mcal$. In a closely related setting,
 M. Christ showed that one does not have exponential decay on the analogue
 of the set $\mcal$. More precisely, in  
Theorem 2.1 of \cite{Chr13} he proved that  if  there exists an open set $U \subset \C^m$ so that, for any $\delta > 0$, there exists a sequence $k_{\nu}
\to \infty$ such that $|B(z,z') \leq e^{- \epsilon k_{\nu}}$ for some $\epsilon > 0$
and for all  $(z,z') \in U$, $|z - z'| \geq \delta$, then the Kaehler potential $\phi$ is real analytic on $U$. The points where it does not have such decay belong to the
analogue of $\mcal$. The  result is reviewed in \S \ref{CHRISTSECT} and related
 to Theorem \ref{MAIN}.  Although Christ's result is not stated or proved for
 toric \kahler manifolds it seems likely that his proof can be modifed to apply
 to them.

Regarding pointwise  asymptotics \eqref{PTWISE} , the locus of pairs $(z,w)$ where this is proved  lie on the same $\R_+^m$ orbit and are also of real codimension $m$ in $M \times M$,
and so the set of pairs  where we have proved existence of  a pointwise limit is very sparse. 
 Although we do not prove it
here, it is doubtful that the pointwise limit exists when the angular coordinates of $z$ and $w$ are unequal,
i.e. if  $z, w \in M^o$ but do not lie on the same $\R_+^m$ orbit. In effect this would require an analytic continuation
of $\phi(z)$ to the  full off-diagonal. 

Even in simple  real analytic cases, the far off-diagonal behavior of the \szego kernel or Calabi diastasis can be singular. 
For instance, when  $M = \CP^m$ equipped with its Fubini-Study metric, the \szego kernel is given in homogeneous coordinates $Z \in \C^{m+1} \backslash \{0\}$
by 
$$\Pi_{h_{FS}^k}(Z,W)  =  \frac{ (Z \cdot \bar{W})^k }{|Z|^k |W|^k}. $$
The Calabi diastasis of the Fubini-Study metric is 
$$-D_{FS} =  \log  \frac{ |(Z \cdot \bar{W})| }{|Z| |W|} = \frac{1}{k} \log \Pi_{h_{FS}^k}(Z,W) . $$
When  $Z \cdot \bar{W} = 0$ the diastasis is infinite and the \szego kernel vanishes. For $\CP^1$
in affine coordinates, this occurs when $w = - \frac{1}{\bar{z}}$, i.e. $z = e^{\rho/2 + i \theta}, w = e^{-\rho/2 - i \theta}.$
Such points do not lie on the same $\R_+$ orbit and in higher dimensions anti-podal points  $(Z, W): Z \cdot \bar{W} = 0$
never lie on the same $\R_+^m $ orbit.

\subsection{Remarks on the proof}

The proof is based on the explicit formula for 
the Bergman kernels
of a positive toric Hermitian line bundle over a toric Kaehler manifold $M^m$
of dimension $m$. As is 
well known, there exists a moment map $\mu: M \to P$ for the torus action which
maps $M$ to a convex Delzant polytope $P \subset \R^m$.  The open orbit $M^o$ of the
complexifed $(\C^*)^m$-action maps to the interior of $P$, while the inverse image
$\mu^{-1}(\partial P)$ of the boundary of $P$ is the divisor at
infinity $\dcal \subset M$. The toric holomorphic sections
$H^0(M, L^k)$ correspond to monomials $z^{\alpha}$ with lattice points
$\alpha \in k P \cap \Z^m. $ In  \S \ref{TBK} and in Lemma \ref{BNTORIC} we review the proof
that  for $z,w$ in the open orbit $M^o$, the  toric Bergman kernels for a toric Hermitian metric $h$ are given by 
\begin{equation} \label{Bhkintro} B_{h^k}(z, w)=
\sum_{ \alpha \in k P}  \frac{z^{\alpha}
\bar{w}^{\alpha}}{Q_{h^k}(\alpha)}, \;.\end{equation} 
where $Q_{h^k}(\alpha)$ is the $L^2$-norm squared of $z^{\alpha}$ with respect
to the natural inner product $\rm{Hilb}_k(h)$ defined by $h$. As recalled in \S \ref{TBK},
$B_{h^k}$ is the local expression of $\Pi_{h^k}$ relative to a local holomorphic  frame $e^k$ of $L^k$ over $M^o$.

The essential point  is that the off-diagonal
value $\log P_{h^k}(e^{\rho_1/2}, e^{\rho_2/2})$ equals the diagonal value at $e^{\half(\rho_1 + \rho_2)}. $
 The point  $e^{\half(\rho_1 + \rho_2)}$ is a kind of midpoint with
respect to this action of the orbit from $z$ to $w$. The well known diagonal Bergman kernel asymptotics
thus determines special cases of the off-diagonal asymptotics. Simple estimates extend the result to $\T$-orbits
of the real slice.

\subsection{Further questions}

A natural question is whether $D^*(z,w) = D(z^*,w^*)$ for general $z,w$. Although the factor $e^{i k \langle \theta_1 - \alpha_2, \alpha\rangle}$
is fast oscillating and can cause significant cancellation, it is not clear whether it can change the exponential decay rate for every
sequence of $k$'s. Given $(z,w)$ it is known \cite{SoZ2} that the sum concentrates exponentially fast around one point $x(z,w) \in P$.
As long as $\langle \theta_1 - \alpha_2, x(z,w)\rangle$ is irrational, there exists  subsequence $k_n$ so that $e^{i k_n \langle \theta_1 - \alpha_2, x(z,w)\rangle} \to 1$.
This suggests that $D^*(z,w)= D(z^*, w^*)$. However the exponential concentration is not well enough localized to prohibit fast oscillation at nearby
lattice points to $x(z,w)$, and  so the suggestion is not necessarily plausible.


\subsection{Remarks on general metrics}

The original motivation for  the off-diagonal decay problem comes from its applications to   finite dimensional approximations to solutions of the initial value problem
for geodesics in the space of Kaehler metrics in \cite{RZ} and for the boundary
value problem in \cite{PhSt, SoZ}. In \cite{RZ} it is conjectured that a solution $\phi_{\tau}(z)$ of the
initial value problem exists up to time $T$ if and only if the limit of $\frac{1}{k} U_{h^k}(i \tau, z,z)$
tends to $\phi_{\tau}(z)$ as $k \to \infty$. Without going into the details, the kernel $U_{h^k}(i \tau, z,z)$ is the
value of the Bergman kernel on a certain off-diagonal set. Obviously, the geodesic cannot exist unless
the Bergman kernel decays at speed $k$ on this set and has a limit. If the limit does not
exist, one may take the limsup as in \eqref{LIMSUP}. Its upper semi-continuous regularization is
always well defined and gives a pluri-subharmonic sub-solution of the geodesic equation. Again, this
solution is trivial if the speed of decay is smaller than $k$. When it equals $k$ but does not have
a pointwise limit, then the regularized limsup will define a singular subsolution, and it is not clear
whether it is a weak solution or not.
  We refer to \cite{RZ} for background and the precise
conjecture. For different reasons,
the off-diagonal decay rate of Bergman kernels has  has been studied by 
M. Christ in several papers \cite{Chr03,Chr13,Chr13B}. 

it would be interesting to have necessary or sufficient conditions under
which  the Bergman kernels may satisfy \eqref{LIMSUP} on all but
a large codimension set 
when the metrics are only $C^{\infty}$. We tend to doubt that they exist except
in very special (perhaps symmetric) situations such as toric \kahler metrics.

\section{\label{BACKGROUND} Background on  Bergman kernels}

The \szego (or Bergman) kernels of a positive Hermitian
line bundle $(L, h) \to (M, \omega)$ over a \kahler manifold are
the kernels of the orthogonal projections $\Pi_{h^k}: L^2(M, L^k)
\to H^0(M, L^k)$ onto the spaces of holomorphic sections with
respect to the inner product $\rm{Hilb}_k(h)$ defined by
\begin{equation} \label{HILBDEF}  (s_1, s_2)_{\rm{Hilb}_k(h)} = \int_M (s_1(z), s_2(z))_{h^k}\;
\omega_h^m/m!. \end{equation}

  Thus, we
have
\begin{equation} \Pi_{h^k} s(z) = \int_M \Pi_{h^k}(z,w) \cdot s(w)
\frac{\omega_h^m}{m!}, \end{equation} where the $\cdot$ denotes
the $h^k$-hermitian inner product at $w$.
 Let $e_L$ be a local holomorphic  frame for $L \to M$ over an
open set $U \subset M$ of full measure,  and let $\{s^k_j=f_j
e_L^{\otimes k}:j=1,\dots,d_k\}$ be an orthonormal basis for
$H^0(M,L^k)$ with $d_k = \dim H^0(M, L^k)$.  Then the \szego
kernel can be written in the form
\begin{equation}\label{szego}  \Pi_{h^k}(z, w): = B_{h^k}
(z, w)\,e_L^{\otimes k}(z) \otimes\overline {e_L^{\otimes
k}(w)}\,,\end{equation} where
\begin{equation}\label{BN} B_{h^k}(z, w)=
\sum_{j=1}^{d_k}f_j(z) \overline{f_j(w)}\;.\end{equation}

The contraction 
\begin{equation} \label{PIB} \Pi_{h^k}(z,z) = B_{h^k}(z,z) ||e||_{h^k}^2 = B_{h^k} e^{- k \phi} \end{equation}
balances the exponential growth/decay of the two factors into a power 
expansion,
\begin{equation} \label{TYZ}  \Pi_{h^k}(z,z) = B_{h^k} (z,z) e^{- k \phi}(z) = a_0 k^m + a_1(z) k^{m-1} + a_2(z) k^{m-2} +
\dots \end{equation} where $a_0$ is constant;  see \cite{Z}.

\subsection{\label{CD} \kahler potential and Calabi diastasis}

Let $h = e^{-\phi}$. When $\phi$ is real analytic, 
the analytic continuation $\phi(z,w)$ of the \kahler potential was
used by Calabi \cite{Ca53} in the analytic case to define a
\kahler distance function, known as the
 `Calabi diastasis
function' \begin{equation} \label{DIASTASIS} D(z,w): =  \phi(z,w)
+ \phi(w,z) - (\phi(z) + \phi(w)).
\end{equation}
Calabi showed that
\begin{equation}\label{HESSDIA}   D(z,w) = d (z,w)^2 +
O(d(z,w)^4),\;\; dd^c_w D(z,w)|_{z = w} = \omega.
\end{equation}
 
When $\phi$ is only  $C^{\infty} $  one may try to replace $\phi(z,w)$ by the   {\it almost analytic extension}
$\phi(z,w)$  of $\phi$ to $M \times M$, defined near the totally real anti-diagonal
$(z, \bar{z}) \in M \times M$ by
\begin{equation} \label{AAE} \phi_{\C} (x + h, x + k) \sim
 \sum_{\alpha, \beta} \frac{\partial^{\alpha + \beta}
\phi}{\partial z^{\alpha}
\partial \bar{z}^{\beta}} (x) \frac{h^{\alpha}}{\alpha!}
\frac{k^{\beta}}{\beta!}. \end{equation}  It is  a smooth
function defined in a small neighborhood $(M \times M)_{\delta} =
\{(z,w): d (z,w)< \delta\} $ of the anti-diagonal in $M \times M$, with the right side of (\ref{AAE}) as its $C^{\infty}$
Taylor expansion along the anti-diagonal, for which  $\dbar
\phi(z,w) = 0$ to infinite order on the anti-diagonal. But the almost
analytic extension is not unique and the associated Calabi diastasis can only
have a geometric meaning as a germ on the diagonal.

\section{\label{TORSECT} Toric Bergman kernels}

We briefly review toric \kahler manifolds and their Bergman kernels; the exposition is similar to that of \cite{SoZ}.
 A toric \kahler manifold is a
\kahler  manifold $(M, J, \omega)$ on which the complex torus
$(\C^*)^m$ acts holomorphically with an open orbit $M^o$. Choosing
a basepoint $m_0$ on the  open orbit identifies $M^o \equiv
(\C^{*})^{m}$. We define holomorphic coordinates on the open orbit 
by giving  the point $z = e^{\rho/2 +i\varphi} m_0$
the  coordinates
\begin{equation} \label{OPENORBCOORDS}
z=e^{\rho/2 +i\varphi} \in (\C^{*})^{m},\quad \rho, \varphi \in
\R^{m}. \end{equation} The real torus $\T \subset (\C^*)^m$ acts
in a Hamiltonian fashion with respect to $\omega$. Its moment map
$\mu = \mu_{\omega}: M \to P \subset {\bf t}^* \simeq \R^m$ (where
${\bf t}$ is the Lie algebra of $\T$) with respect to $\omega$
defines a singular torus fibration over a convex lattice polytope
$P$.  

When $M$ is a smooth toric manifold, $P$ is a Delzant polytope defined by a set of linear inequalities
$$\ell_r(x): =\langle x, v_r\rangle-\lambda_r \geq 0, ~~~r=1, ..., d, $$
where $v_r$ is a primitive element of the lattice and
inward-pointing normal to the $r$-th $(m-1)$-dimensional facet
$F_r = \{\ell_r = 0\}$  of $P$. By  a facet we mean an $m-1$-
 dimensional face of $\partial P$For $x \in
\partial P$ we denote by
$$\fcal(x) = \{r: \ell_r(x) = 0\}$$
the set of facets containing $x$.

Over the open orbit one t has a torus fibration,
$$\mu: M^o \simeq P^o \times \T.$$
We let $x$ denote the Euclidean coordinates on $P$. 
The components
$(I_1, \dots, I_m)$  of the moment map are called action variables
for the torus action. The symplectically dual variables on $\T$
are called the angle variables. Given a basis of ${\bf t}$ or
equivalently of the action variables,  we denote by
$\{\frac{\partial}{\partial \theta_j}\}$ the corresponding
generators (Hamiltonian vector fields) of the $\T$ action. Under
the complex structure $J$, we also obtain generators
$\frac{\partial}{\partial \rho_j}$ of the $\R_+^m$ action.

The  generators of the $\T$
action vanish on the divisor at infinity,  $\dcal$. If $p \in \dcal$ and
$\T_p$ denotes the isotropy group of $p$, then the generating
vector fields of $\T_p$ become linearly dependent at $P$.


\subsection{\label{SOC}Monomial sections}

There exists an orthonormal basis $\chi_{\alpha}$ of $H^0(M, L^k)$ given by eigensections
of the $\T$ action for $\alpha \in k \overline{P}$. On the open orbit in the coordinates above\footnote{Throughout the article we use standard
multi-index notation, and put $|\alpha| = \alpha_1 + \cdots +
\alpha_m$.},
$$\chi_{\alpha}(z) =  z^{\alpha} = z_1^{\alpha_1} \cdots
z_n^{\alpha_n}.$$

Let $\# P $ denote the number of lattice points $\alpha \in \N^m
\cap P$. We denote by $L \to M$ the invariant line bundle obtained
by pulling back $\ocal(1) \to \CP^{\# P - 1}$ under the monomial
embedding defining $M$.
 A natural basis of the
space of holomorphic sections $H^0(M, L^k)$ associated to the
$k$th power of $L \to M$ is defined by the  monomials $z^{\alpha}$
where $\alpha$ is a lattice point in the $k$th dilate of the
polytope, $\alpha \in k P \cap \N^m.$ That is, there exists an
invariant frame $e$ over the open orbit so that $s_{\alpha}(z) =
z^{\alpha} e^k$. We denote the dimension of $H^0(M, L^k)$ by $N_k$.
We equip $L$ with a toric Hermitian metric $h = h_0$ whose
curvature $(1,1)$ form $\omega_0 = i \ddbar \log ||e||_{h_0}^2$
lies in $\hcal.$ We often express the norm in terms of a local
\kahler potential, $||e||_{h_0}^2 = e^{- \phi}$, so that
$|s_{\alpha}(z)|_{h_0^k}^2 = |z^{\alpha}|^2 e^{- k \phi (z)}$ for
$s_{\alpha} \in H^0(M, L^k)$.

The monomials are orthogonal with respect to any
such toric inner product and have the norm-squares
\begin{equation} \label{QFORM} Q_{h^k}(\alpha) = ||s_{\alpha}||_{h^k}^2 = \int_{\C^m} |z^{\alpha}|^2 e^{-
k \phi(z)} dV_{\phi}(z), \end{equation} where $dV_{\phi} = (i
\ddbar \phi)^m/ m!$.


\subsection{Coordinates near the divisor at infinity}

To determine the asymptotics of $P_{h^k}(z,w)$ when at least of $z,w$ lies on the divisor at infinity, we need expressions for the monomials on the divisor at infinity, i.e. we need to define holomorphic 
coordinates valid in neighborhoods of points of $\dcal$. We follow   \cite{SoZ,STZ}; the
 following Lemma is proved in  \cite{STZ}.

\begin{lem}
\label{divisor}
Let $\alpha \in P \cap \Z^{m}$, and $z \in M_{P}$. Then, $\chi_{\alpha}(z)=0$
if and only if
\begin{equation}
\label{Nulset}
\mu (z) \in \bigcup \{\bar{F}\,;\,F \mbox{ is a facet }\alpha \not \in \bar{F}\}.
\end{equation}
\end{lem}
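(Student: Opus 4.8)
The plan is to reduce the assertion to a local computation in the affine toric charts of $M_P$ adapted to the faces of the Delzant polytope $P$, exploiting the fact that away from the divisor at infinity the monomial sections are units.

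First I would observe that on the open orbit $M^o \cong (\C^*)^m$ one has $\chi_\alpha(z) = z^\alpha = z_1^{\alpha_1}\cdots z_m^{\alpha_m}$ with $\alpha \in \N^m$, which is nowhere vanishing there; hence $\{\chi_\alpha = 0\} \subset \dcal = \mu^{-1}(\partial P)$, and it suffices to analyze the vanishing of $\chi_\alpha$ at a point $p$ with $x := \mu(p) \in \partial P$.

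Second, I would fix such an $x$ and pass to the affine toric chart $U_x$ of $M_P$ containing $p$. Writing $\fcal(x) = \{r : \ell_r(x) = 0\}$ for the set of facets through $x$, the Delzant condition asserts that the inward normals $\{v_r : r \in \fcal(x)\}$ form part of a $\Z$-basis of the lattice; completing this to a basis produces the holomorphic coordinates on $U_x$ used in \cite{SoZ,STZ}, in which $U_x \cong \C^{|\fcal(x)|} \times (\C^*)^{m-|\fcal(x)|}$, the invariant divisor $D_r := \mu^{-1}(\overline{F_r})$ meets $U_x$ in the coordinate hyperplane $\{w_r = 0\}$ for each $r \in \fcal(x)$, and, in the corresponding local frame of $L$ over $U_x$, the section $\chi_\alpha$ has the form $\chi_\alpha = u_\alpha \prod_{r \in \fcal(x)} w_r^{\ell_r(\alpha)}$ with $u_\alpha$ holomorphic and nowhere zero on $U_x$. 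Here the exponent of $w_r$ is exactly the value $\ell_r(\alpha) = \langle \alpha, v_r\rangle - \lambda_r$ of the $r$-th defining affine function of $P$ at the lattice point $\alpha$, which is $\geq 0$ because $\alpha \in P$ and equals $0$ precisely when $\alpha \in \overline{F_r}$. The hard part — really the only part requiring work — is this step: verifying that the coordinates $w_r$ genuinely cut out the divisors $D_r$ and that $\chi_\alpha$ has the stated order of vanishing along each of them, via the change of lattice basis; this is also the one place where the Delzant hypothesis enters.

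Finally, combining the two reductions yields: $\chi_\alpha(p) = 0$ if and only if $\ell_r(\alpha) > 0$ for some $r \in \fcal(x)$, that is, $x \in \overline{F_r}$ while $\alpha \notin \overline{F_r}$. Since this condition depends on $z$ only through $x = \mu(z)$, it is equivalent to $\mu(z) \in \bigcup\{\overline{F}\,;\, F \mbox{ a facet}, \alpha \notin \overline{F}\}$, which is the assertion of the Lemma. Everything outside the second step is immediate from the toric description of $H^0(M_P, L)$ by lattice points of $P$ recalled above.
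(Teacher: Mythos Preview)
The paper does not supply its own proof of this lemma; it simply states that ``the following Lemma is proved in \cite{STZ}'' and then, in the paragraphs that follow, sets up exactly the vertex charts $U_{v_0}$, the change of coordinates $\eta$, and the transformation $\tilde{\Gamma}$ that your argument relies on. So there is no in-paper proof to compare against.

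Your argument is the standard one and is essentially correct. One small point of imprecision: you speak of a chart $U_x$ attached to an arbitrary boundary point $x$ and model it as $\C^{|\fcal(x)|}\times(\C^*)^{m-|\fcal(x)|}$. The paper (and \cite{STZ}) works instead with charts $U_{v_0}\cong\C^m$ attached to vertices $v_0$; for a general $x\in\partial P$ one chooses a vertex $v_0$ of the closed face containing $x$, so that $\fcal(x)\subset\fcal(v_0)$. In that chart the formula reads $\chi_\alpha = (\text{unit on }U_{v_0})\cdot\prod_{r\in\fcal(v_0)}\eta_r^{\ell_r(\alpha)}$, and since $\eta_r(p)\neq 0$ for $r\in\fcal(v_0)\setminus\fcal(x)$, the extra factors are units near $p$ and your conclusion goes through unchanged. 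Your $U_x$ is really the open subset $\{\eta_r\neq 0:r\notin\fcal(x)\}\subset U_{v_0}$, which is fine, but it is worth being explicit that this is how it sits inside the standard affine cover.
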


For each vertex $v_0 \in P$,
we define  the chart $U_{v_0}$
 by
\begin{equation}
U_{v_{0}}:=\{z \in M_{P}\,;\,\chi_{v_{0}}(z) \neq 0\},
\end{equation}  Since $P$ is Delzant, we can choose lattice points
$\alpha^{1},\ldots,\alpha^{m}$ in $P$ such that each $\alpha^{j}$
is in an edge incident to the vertex $v_{0}$, and the vectors
$v^{j}:=\alpha^{j}-v_{0}$ form a basis of $\Z^{m}$. We define
\begin{equation}
\label{CChange} \eta:(\C^{*})^{m} \to (\C^{*})^{m}, \quad
\eta(z)=\eta_{j}(z):=(z^{v^{1}},\ldots,z^{v^{m}}).
\end{equation}
The map $\eta$ is a $\T$-equivariant biholomorphism with  inverse
\begin{equation}  z:(\C^{*})^{m} \to (\C^{*})^{m},\quad z(\eta)=(\eta^{\Gamma
e^{1}},\ldots,\eta^{\Gamma e^{m}}),
\end{equation}
where $e^{j}$ is the standard basis for $\C^{m}$, and $\Gamma$ is
an $m \times m$-matrix with $\det \Gamma =\pm 1$ and integer
coefficients defined by
\begin{equation}\label{GAMMADEF}
\Gamma v^{j}=e^{j},\quad v^{j}=\alpha^{j}-v_{0}.
\end{equation}Then
\[
\chi_{\alpha^{j}}(z)= (\prod \eta_{j}(z))\chi_{v^{0}}(z),\quad
z \in (\C^{*})^{m}, \;\;\; (\prod \eta_{j}(z)) := \prod_{j =1}^m z^{v^{j}}).
\]

 The
corner of $P$ at $v_0$ is transformed to the standard corner of
the orthant $\R_+^m$ by
 the affine linear transformation
\begin{equation}\label{GAMMATWDEF}
\tilde{\Gamma}:\R^{m} \ni u \to \Gamma u -\Gamma v_{0} \in \R^{m},
\end{equation}
which preserves $\Z^{m}$, carries $P$ to a polytope $Q_{v_0}
\subset \{x \in \R^{m}\,;\,x_{j} \geq 0\}$ and carries the facets
$F_j$ incident at $v_0$ to the coordinate hyperplanes $=\{x \in
Q_{v_0}\,;\,x_{j}=0\}$. The  map $\eta$ extends a homeomorphism:
\begin{equation}
\eta:U_{v_{0}} \to \C^{m},\quad \eta(z_{0})=0,\quad z_{0}=\mbox{
the fixed point corresponding to } v_{0}.
\end{equation}
By this homeomorphism, the set $\mu_{P}^{-1}(\bar{F}_{j})$
corresponds to the set $\{\eta \in \C^{m}\,;\,\eta_{j}=0\}$. If
$\bar{F}$ be a closed face with $\dim F=m-r$ which contains
$v_{0}$, then there are facets  $F_{i_{1}},\ldots,F_{i_{r}}$
incident at $v_0$ such that $\bar{F}=\bar{F}_{i_{1}} \cap \cdots
\cap \bar{F}_{i_{r}}$. The subvariety $\mu_{P}^{-1}(\bar{F})$
corresponding $\bar{F}$ is expressed by
\begin{equation}\label{ETAIJ}
\mu_{P}^{-1}(\bar{F}) \cap U_{v_{0}}= \{\eta \in
\C^{m}\,;\,\eta_{i_{j}}=0,\quad j=1,\ldots,r\}.
\end{equation}
When working near a point of $\mu_{P}^{-1}(\bar{F})$, we simplify
notation by writing
  \begin{equation} \label{PRIMECOORD} \eta=(\eta',\eta'') \in
\C^{m}=\C^{r} \times \C^{m-r} \end{equation}  where $\eta' =
(\eta_{i_j})$ as in (\ref{ETAIJ}) and where $\eta''$ are the
remaining $\eta_j$'s, so that  $(0,\eta'')$ is a local coordinate
of the submanifold $\mu_{P}^{-1}(\bar{F})$. When the point $(0,
\eta'') $ lies in the open orbit of $\mu_{P}^{-1}(\bar{F})$, we
often write $\eta'' = e^{i \theta'' + \rho''/2}.$

These coordinates may be described more geometrically as {\it
slice-orbit} coordinates \cite{SoZ}. Let $P_0 \in \mu_{P}^{-1}(\bar{F})$ and
let $(\C^*)^m_{P_0}$ denote its stabilizer (isotropy) subgroup.
Then there always exists a local slice at $P_0$, i.e., a local
analytic subspace $S \subset M$ such that  $P_0 \in S$, $S$ is
invariant under $(\C^*)^m_{P_0}$,  and such that  the natural
$(\C^*)^m$ equivariant map of the normal bundle of the orbit
$(\C^*)^m \cdot P_0$,
\begin{equation} \label{SLICE} [\zeta, P] \in (\C^*)^m
\times_{(\C^*)^m_z} S \to \zeta \cdot P \in M
\end{equation}  is a biholomorphism onto $(\C^*)^m \cdot S$. The  slice $S$ can be
taken to be the image of a ball in  the hermitian normal space
$T_{P_0} ((\C^*)^m P_0)^{\perp}$ to the orbit under any local
holomorphic embedding  $w: T_{P_0} ((\C^*)^m P_0)^{\perp} \to M$
with $w(P_0) = P_0, dw_{P_0} = Id.$  The affine coordinates
$\eta''$ above define the slice $S = \eta^{-1} \{(z', z''(P_0)):
z' \in (\C^*)^{r}\}$.  The local `orbit-slice' coordinates are
then defined by
\begin{equation} \label{OS} P = (z', e^{i \theta'' + \rho''/2})
\iff \eta(P) = e^{i \theta'' + \rho''/2} (z', 0)
\end{equation} where $(z', 0) \in S$ is  the point on the slice  with affine holomorphic
coordinates $z' = (\eta')$.

For  simplicity of notation we  suppress the transformation
$\tilde{\Gamma}$ and coordinates $\eta$, and  we will use the
`orbit-slice' coordinates of (\ref{OS}). Thus, we  denote the
monomials cooresponding to lattice points $\alpha$ near a face $F$
by \begin{equation} \label{CHIALPHA} \chi_{\alpha', \alpha''}(z', e^{ \langle (i \theta'' + \rho''/2)}): =  (z')^{\alpha'} e^{ \langle (i \theta'' + \rho''/2), \alpha'' 
\rangle}, \end{equation} where $\tilde{\Gamma}(\alpha) = (\alpha', \alpha'')$
with  $\alpha''$
 in the coordinate hyperplane corresponding under
$\tilde{\Gamma}$ to $F$ and with $\alpha'$ in the normal space.

 \subsection{\label{TCD} Toric Calabi diastasis}

 In the toric case, the Kaehler potential $\phi(z,w)$ is $\T$-invariant, and so
 \begin{equation} \label{phitilde} \phi(e^{\rho/2 + i \theta)}) = \tilde{\phi} (\rho) \end{equation}
 where $\tilde{\phi} (\rho)$ is a smooth convex function on $\R^m$. We also
 write $\phi(z) = F(|z|^2)$ where $F(e^{\rho}) = \tilde{\phi}(\rho).$ Then
 \begin{equation} \label{AAEF} F_{\C} (z \cdot \bar{w}) = \; \mbox{the almost analytic extension of }\;
     F(|z|^2)\; \mbox{to}\; M \times M.  \end{equation}
    Hence,
    \begin{equation} \label{TORDIA} -D(z,w) = F(z \cdot \bar{w}) + F(w \cdot \bar{z}) - F(|z|^2) - F(|w|^2). \end{equation}
    
As an example,  the  Bargmann-Fock(-Heisenberg) 
 Kaehler potential is $|z|^2$ and
the  analytic extension is
  $F_{BF, \C}(z,w) = z \cdot \bar{w} $ and $D(z,w) = |z|^2 + |w|^2 - 2 \Re z \bar{w} = |z -w|^2. $
  The example of the Fubini-Study metric on $\CP^m$ was discussed in the introduction.

We now give formulae for the diastasis when two points lie on the same $\R_+^m$ orbit.
    If we write $z = e^{\rho_1/2 + i \theta_1}, w = e^{\rho_2/2 - i \theta_2}$,  and  $\tilde{F}(\rho) = F(e^{\rho})$, then

   
       \begin{lem}\label{CDMSAME}
       \begin{itemize}
       
       \item
       When $z, w\in M^o$ lie on the same $\R_+^m$ orbit, i.e. if $\theta_1 = \theta_2$ then
      \begin{equation} \label{TORDIAc} - \half D(z,w) = \tilde{F}(\half(\rho_1 + \rho_2)) -  \half(\tilde{F}(\rho_1) +  \tilde{F}(\rho_2)). \end{equation}
             
             \item 
             If $z,w \in F$ for some face $F$ of $P$, then in the coordinates \eqref{OS} with $z = (0, e^{i \theta_1'' + \rho_1'/2}),
             w =  (0, e^{i \theta_2'' + \rho_2''/2}), $
                  
                            \begin{equation} \label{TORDIAc2} - \half D(z,w) = \tilde{F}(\half(\rho_1'' + \rho_2'')) -  \half(\tilde{F}(\rho_1'') +  \tilde{F}(\rho_2'')).
                \end{equation}
                
               
               \end{itemize}
               
             \end{lem}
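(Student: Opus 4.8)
The plan is to derive both identities directly from the defining formula \eqref{TORDIA}, namely $-D(z,w) = F(z\cdot\bar w) + F(w\cdot\bar z) - F(|z|^2) - F(|w|^2)$, by computing the relevant dot products in the given coordinates. For the first item, write $z = e^{\rho_1/2 + i\theta}$ and $w = e^{\rho_2/2 + i\theta}$ with a common angular coordinate $\theta$. Then $z\cdot\bar w$ has $j$-th component $e^{(\rho_{1,j}+\rho_{2,j})/2} e^{i\theta_j} e^{-i\theta_j} = e^{(\rho_{1,j}+\rho_{2,j})/2}$, and likewise $w\cdot\bar z$ gives the same real vector $e^{(\rho_1+\rho_2)/2}$, because the phases cancel exactly when the angles agree. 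Since $F(e^{\rho}) = \tilde F(\rho)$ by definition, we get $F(z\cdot\bar w) = F(w\cdot\bar z) = \tilde F\!\left(\tfrac12(\rho_1+\rho_2)\right)$, while $F(|z|^2) = \tilde F(\rho_1)$ and $F(|w|^2) = \tilde F(\rho_2)$. Substituting into \eqref{TORDIA} yields $-D(z,w) = 2\tilde F\!\left(\tfrac12(\rho_1+\rho_2)\right) - \tilde F(\rho_1) - \tilde F(\rho_2)$, which is \eqref{TORDIAc} after dividing by $2$.

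The only subtlety worth flagging is the passage from the almost-analytic extension \eqref{AAEF} to the clean substitution $F_{\C}(z\cdot\bar w)$: a priori $F_{\C}$ is only defined as an asymptotic Taylor series along the anti-diagonal and is non-unique. The point is that when $z$ and $w$ lie on the same $\R_+^m$ orbit the argument $z\cdot\bar w$ is a \emph{real positive} vector lying in the domain of the genuine function $F$ itself (equivalently, $(z,w)$ and $(z^*,w^*)$ give the same value since only the $\rho$-coordinates enter), so every almost-analytic extension of $F(|z|^2)$ agrees with $F$ there and the expression is unambiguous. This is really the content of the ``essential point'' highlighted in the Remarks on the proof, that the off-diagonal value reduces to a diagonal value at the midpoint $e^{(\rho_1+\rho_2)/2}$; I would state it as a one-line observation rather than belabor it. If one wanted to avoid \eqref{AAEF} altogether one could instead invoke Lemma \ref{BNTORIC} and the toric Bergman sum \eqref{Bhkintro} directly, but using \eqref{TORDIA} is shorter.

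For the second item the argument is identical, carried out inside the submanifold $\mu_P^{-1}(\bar F) \subset M$, which is itself a (lower-dimensional) toric \kahler manifold. In the orbit-slice coordinates \eqref{OS}, the points $z = (0, e^{i\theta_1'' + \rho_1''/2})$ and $w = (0, e^{i\theta_2'' + \rho_2''/2})$ have all primed coordinates zero, so the dot products $z\cdot\bar w$ and $w\cdot\bar z$ involve only the double-primed slots; the primed slots contribute $0\cdot\bar 0 = 0$ and drop out of $F$. By hypothesis the angular parts $\theta_1'', \theta_2''$ are equal (this is what ``$z,w \in F$ lie on the same $\R_+^m$ orbit of $F$'' means in this normalization, matching the first item), so the phases cancel and $z\cdot\bar w = w\cdot\bar z = e^{(\rho_1''+\rho_2'')/2}$ as a positive real vector. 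Then $F(z\cdot\bar w) = \tilde F\!\left(\tfrac12(\rho_1''+\rho_2'')\right)$ and $F(|z|^2) = \tilde F(\rho_1'')$, $F(|w|^2) = \tilde F(\rho_2'')$, and \eqref{TORDIA} gives \eqref{TORDIAc2} after dividing by $2$. The main thing to get right here is bookkeeping: confirming that $F$ restricted to the face, expressed in the $\rho''$ variables of the slice, is the relevant convex function, which follows from $\T$-invariance of $\phi$ and the fact that the face's open orbit is coordinatized by $e^{i\theta'' + \rho''/2}$ as recorded after \eqref{ETAIJ}. I do not anticipate a genuine obstacle; the whole lemma is an exercise in substituting equal angles into \eqref{TORDIA}, with the one conceptual remark being why the almost-analytic extension causes no ambiguity on the real orbit.
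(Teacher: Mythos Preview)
Your proposal is correct and is precisely the computation the paper omits: the paper states the lemma without proof, only appending the remark that convexity of $F$ makes the right sides negative and that the second item ``reflects the fact that the diastasis of a complex submanifold is the restriction of the diastasis on the ambient manifold \cite{Ca53}.'' Your direct substitution into \eqref{TORDIA}, together with the observation that when $\theta_1 = \theta_2$ the argument $z\cdot\bar w$ is real positive and hence lies in the genuine domain of $F$ (bypassing any almost-analytic extension ambiguity), is exactly the intended verification; for the second item the paper's appeal to Calabi and your in-coordinates computation are two phrasings of the same thing.
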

             Since $F$ is convex, the right sides are negative, i.e. $D(z,w) > 0$. The formula for $z,w \in F$ reflects the fact that
             the diastasis of a complex submanifold is the restriction of the diastasis on the ambient manifold \cite{Ca53}.

\subsection{\label{TBK} Toric Bergman kernels }

We now justify \eqref{Bhkintro}.

\begin{lem} \label{BNTORIC}The toric Bergman kernel is given on the open orbit by 
\begin{equation}B_{h^k}(z, w)=
\sum_{ \alpha \in k P}  \frac{z^{\alpha}
\bar{w}^{\alpha}}{Q_{h^k}(\alpha)} \;.\end{equation} 
\end{lem}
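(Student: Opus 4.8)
The plan is to prove Lemma~\ref{BNTORIC} by combining the general formula \eqref{BN} for the Bergman kernel relative to a local holomorphic frame with the explicit orthogonal basis of monomial sections described in \S\ref{SOC}. First I would fix the invariant holomorphic frame $e$ over the open orbit $M^o$, so that every section $s_\alpha \in H^0(M,L^k)$ is written $s_\alpha(z) = z^\alpha e^{\otimes k}$ for $\alpha \in kP \cap \Z^m$. The key structural input is that the monomials $\{z^\alpha\}_{\alpha \in kP \cap \Z^m}$ are mutually orthogonal with respect to the inner product $\mathrm{Hilb}_k(h)$: this follows because $\mathrm{Hilb}_k(h)$ is $\mathbf{T}^m$-invariant (the metric $h$ and the volume form $\omega_h^m/m!$ are both invariant), so the $s_\alpha$ are eigensections of the unitary $\mathbf{T}^m$-action lying in distinct characters, hence orthogonal. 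Their squared norms are precisely the $Q_{h^k}(\alpha)$ of \eqref{QFORM}.

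Given this, I would normalize: the sections $\{s_\alpha / Q_{h^k}(\alpha)^{1/2}\}$ form an orthonormal basis of $H^0(M,L^k)$. Writing $f_\alpha(z) = z^\alpha / Q_{h^k}(\alpha)^{1/2}$ for the frame coefficients of this orthonormal basis, formula \eqref{BN} gives
\begin{equation*}
B_{h^k}(z,w) = \sum_{\alpha \in kP \cap \Z^m} f_\alpha(z)\,\overline{f_\alpha(w)} = \sum_{\alpha \in kP \cap \Z^m} \frac{z^\alpha\,\bar w^\alpha}{Q_{h^k}(\alpha)},
\end{equation*}
which is the claimed identity (with the convention that the sum over $\alpha \in kP$ means $\alpha \in kP \cap \Z^m$). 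I would remark that \eqref{BN}, though stated there for a frame of full measure, applies verbatim on $M^o$ since $M \setminus M^o = \dcal$ has measure zero, and that the right-hand side is a finite sum because $kP$ is a bounded polytope.

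The only real point requiring care — the step I would flag as the main obstacle, though it is more bookkeeping than difficulty — is justifying the identification of $\|s_\alpha\|^2_{h^k}$ with the integral $\int_{\C^m} |z^\alpha|^2 e^{-k\phi(z)}\, dV_\phi(z)$ over the open orbit, i.e.\ that one may integrate over $M^o \cong (\C^*)^m$ rather than over $M$. This is legitimate because the complement $\dcal = \mu^{-1}(\partial P)$ has measure zero with respect to $\omega_h^m$, and the integrand $|s_\alpha(z)|^2_{h^k} = |z^\alpha|^2 e^{-k\phi(z)}$ together with $\omega_h^m/m! = (i\ddbar\phi)^m/m! = dV_\phi$ extends to the density defining the norm; convergence of the integral is guaranteed since $s_\alpha$ is a global holomorphic section of a line bundle over the compact manifold $M$. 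With that observed, the lemma follows immediately, and indeed this is the formula \eqref{Bhkintro} asserted in the introduction.
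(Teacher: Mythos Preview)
Your proposal is correct and follows essentially the same approach as the paper: both arguments fix the invariant frame over the open orbit, use that the monomials $s_\alpha = z^\alpha e^{\otimes k}$ form an orthogonal basis of $H^0(M,L^k)$ with norm-squares $Q_{h^k}(\alpha)$, normalize to an orthonormal basis, and plug into the general formula \eqref{BN}. Your version is somewhat more detailed in justifying orthogonality via the $\mathbf{T}^m$-action and in checking that integration over $M^o$ computes the correct norm, but these are elaborations rather than a different route.
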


\begin{proof} We recall that
$\chi_{\alpha}(z) = z^{\alpha}$ is the local representative of
$s_{\alpha}$ in the open orbit with respect to an invariant frame.
Since $\{\frac{\chi_{\alpha}}{\sqrt{\QQ_{h^k}(\alpha)}}\}$ is the
local expression of an  orthonormal basis, we have
$$B_{h^k}(z,w) = \sum_{\alpha \in k P \cap \Z^m} \frac{\chi_{\alpha}(z)
\overline{\chi_{\alpha}(w)} }{\QQ_{h^k}(\alpha)} $$ hence
\begin{equation} \label{HATPI} \hat{\Pi}_{h^k}(z,0; w, 0) = \sum_{\alpha \in k P \cap \Z^m} \frac{\chi_{\alpha}(z)
\overline{\chi_{\alpha}(w)}e^{- k (\phi(z) + \phi(w))/2}
}{\QQ_{h^k}(\alpha)}.
\end{equation}

In the
case of a toric variety with $0 \in \bar{P}$, there exists a frame
$e$ such that $s_{\alpha}(z) = z^{\alpha} e$ on the open orbit,
and then the Bergman kernel takes the form \eqref{Bhkintro}.
\end{proof}

\section{\label{PROOFSECT} Off-diagonal decay: Proof of Theorem  \ref{<}}

We break up the discussion into three cases accordingly as $z, w \in M^o$,
or one (or both) lie on $\dcal$. The discussion is similar to that  of the diastasis in \S \ref{TCD}.

\subsection{Case (i): $z, w \in M^o$}

We now estimate $ |B_{h^k}(z, w)|$ on a toric \kahler manifold when $z,w \in M^o$.
We write $z = e^{\rho/2 + i \theta}$ where $\rho \in \R^m$ and $e^{i \theta} \in \T$
and (as in \eqref{phitilde}) $\tilde{\phi}(\rho) = \phi(e^{\rho/2})$.
Then $z^{\alpha} = e^{\langle \alpha, \rho_1 \rangle/2 + i \langle \theta_1, \alpha\rangle}$ and  $w^{\alpha} = e^{\langle \alpha, \rho_2 \rangle/2 + i \langle \theta_2, \alpha\rangle}$. Hence
\begin{equation}\label{BNbb} |B_{h^k}(z, w)|= \left|
\sum_{\alpha \in k P} \frac{e^{ \langle \alpha, \half (\rho_1 + \rho_2)/2 \rangle
+ i \langle \alpha, \theta_1 -\theta_2 \rangle}}{Q_{h^k}(\alpha)} \right| \;.\end{equation}Thus,
\begin{equation}
 P_{h^k}(z,w) =  \left|
\sum_{\alpha \in k P} \frac{e^{\langle \alpha, \half (\rho_1 + \rho_2)/2 \rangle
+ i \langle \alpha, \theta_1 -\theta_2 \rangle}}{Q_{h^k}(\alpha)} \right| e^{-\half k( \tilde{\phi}(\rho_1) +  \tilde{\phi}(\rho_2))}. \end{equation}

We  first consider a special case:

\subsection{$z$ and $w$ have the same $\theta$ coordinate}

Suppose that   $z = e^{\rho_1/2 + i \theta}$ and $w = e^{\rho_2/2 + i \theta}$. 
\begin{equation}\label{BNc} |B_{h^k}(z, w)|= 
\sum_{\alpha \in k P}  \frac{e^{\langle \alpha, \half(\rho_1 + \rho_2) \rangle}}{Q_{h^k}(\alpha)}  \;.\end{equation}
This is equivalent to an on-diagonal value of the Bergman kernel: 
$$B_{h^k}(e^{\rho'/2}, e^{\rho'/2}) = \sum_{\alpha \in k P}  \frac{e^{\langle \alpha, \rho' \rangle}}{Q_{h^k}(\alpha)},$$
with $\rho' = \half ( \rho_1 + \rho_2).$ By the diagonal Bergman kernel asymptotics
\eqref{TYZ},
\begin{equation}\label{BNd} |P_{h^k}( e^{\rho_1/2 + i \theta}, e^{\rho_2 /2+ i \theta})|
\simeq  k^m e^{k \tilde{\phi}(\half(\rho_1 + \rho_2))} e^{- k \half(\tilde{\phi}(\rho_1) +\tilde{ \phi}(\rho_2))} \;.\end{equation}
Since $\tilde{\phi}$ is convex,
$$\tilde{\phi}(\half(\rho_1 + \rho_2)) < \half (\tilde{\phi}(\rho_1) + \tilde{\phi}(\rho_2)).$$
It follows that 
$$\lim_{k \to \infty}\frac{1}{k} \log |P_{h^k}( e^{\rho_1/2 + i \theta}, e^{\rho_2 + i \theta})|
= \tilde{\phi}(\half(\rho_1 + \rho_2)) -  \half(\tilde{\phi}(\rho_1) + (\tilde{\phi}(\rho_2)) < 0. $$

\begin{rem} As mentioned in the introduction, the assumption that $z,w$ have the same $\theta$-coordinate
could be put more intrinsically by saying that $z,w$ lie on the same orbit of the $\R_+$-action defined
by the $(\C^*)^m$ action.

\end{rem}

\subsection{$z,w \in M^o$  but do not lie on the same $\R_+^k$ orbit}

It is obvious that 

\begin{equation}\label{BNbc}\begin{array}{lll}  |B_{h^k}(z, w)| & = & \left|
\sum_{\alpha \in k P} \frac{e^{ \langle \alpha, \half (\rho_1 + \rho_2) \rangle
+ i \langle \alpha, \theta_1 -\theta_2 \rangle}}{Q_{h^k}(\alpha)} \right|\\&&\\
& \leq &  
\sum_{\alpha \in k P} \frac{e^{i \langle \alpha, \half (\rho_1 + \rho_2) \rangle
}}{Q_{h^k}(\alpha)} \\ \end{array} \;.\end{equation}
Arguing as in the previous case, 
$$\limsup_{k \to \infty}\frac{1}{k} \log |P_{h^k}( e^{\rho_1/2 + i \theta}, e^{\rho_2 + i \theta_2})|
\leq  \tilde{\phi}(\half(\rho_1 + \rho_2)) -  \half(\tilde{\phi}(\rho_1) + (\tilde{\phi}(\rho_2)) < 0. $$

\subsection{Case (ii)\;\;$z \in M^o, w \in \dcal$}

In this, case, we must use the slice-orbit coordinates adapted to the face $F$ containing $w$. Of course,
$z,w$ lie on different $\R^m$ orbits (even $(\C^*)^m$-orbits), and we can only give an upper bound then. 
We use the coordinates in  \eqref{OS}, so that $z =  (z', e^{i \theta_1'' + \rho_1''/2})$ and $w =(0,   e^{i \theta_2'' + \rho_1''/2}))$
and further write $z' = e^{\rho'/2 + i \theta'}.$ Then,
\begin{equation}\label{BNbc} B_{h^k}(z, w)= 
\sum_{\alpha \in k P} \frac{e^{i \langle \alpha', \rho' + i\theta' \rangle}  e^{ \langle \alpha'', (\rho_1''/2 + \rho_2''/2\rangle
+ i \langle \alpha'', (\theta_1''-  \theta_2'')\rangle}}{Q_{h^k}(\alpha)}. \end{equation}
If $\theta' = 0, \theta_1'' =\theta_2''$ then 
\begin{equation}\label{BNbd} |B_{h^k}(z, w)|= 
\sum_{\alpha \in k P} \frac{e^{i \langle \alpha', \rho'\rangle}  e^{ \langle \alpha'', (\rho_1''/2 + \rho_2''/2\rangle}}{Q_{h^k}(\alpha)}. \end{equation}
Otherwise this is an upper bound. This is the value on the diagonal of the Bergman kernel at the point with coordinates
$$(e^{\half \rho'}, e^{\frac{1}{4} (\rho_1'' + \rho_2'') }). $$
We again use the diagonal Bergman kernel asymptotics \eqref{TYZ}. We cannot use the open-orbit \kahler potential since it
is not well-defined on the divisor $\dcal$. But the diastasis is independent of the choice of \kahler potential and by the
previous calculation we find that $\frac{1}{k} \log P_{h^k} (z,w)$ tends to $D(z,w)$.

\section{\label{CHRISTSECT} Off-diagonal Bergman kernel for M. Christ's metrics}

In this section, we briefly explain the modification of Theorem \ref{MAIN} which
applies to M. Christ's metrics \cite{Chr13}.  He considers
 Kaehler metrics on $\C^m$ whose potentials $\phi(z) = \phi(x + i y)$
are functions of $x$ alone. Such metrics are analogous to  toric metrics with the torus
replaced by $\R^m$. More precisely, we  think of $\R^m_x $ and $\R^m_y$ as two additive subgroups
of $\C^m$, both with non-compact orbits. The difference between toric Kaehler metrics
and those of \cite{Chr13} is that the $e^{i \theta}$ action with compact torus
orbits is replaced by the additive $i \R^n$ action with non-compact orbits,
and the metric is invariant under this action. Since these metrics are similar
to toric metrics except that the torus is `unravelled' to $\R^m$, we refer
to them as `non-compact toric' metrics.  To our knowledge, there does not
exist a standard term in complex geometry.

For non-compact toric Kaehler manifolds, one has an $\R^m$ action replacing
the $\T$ actions and therefore a continuous spectrum of joint eigenfunctions.
The image of the moment map of the $\R^m$ action is all of $\R^m$ rather
than a convex Delzant polytope $P$. The exponents $\alpha$ are lattice points
in $kP$ in the toric case, but are simply elements $\alpha \in \R^m$ for every
power of the line bundle in the non-compact case. In the toric case one
writes the monomial $e^{\langle \alpha, \rho/2 + i \theta}$ in angle-action
coordinates and in the non-compact case one writes the joint eigenfunctions as
$e^{\langle \alpha, \rho/2 + i y}$ with $\alpha, y \in \R^m$.

The weighted Hilbert space in the non-compact case for the $\lambda$-th
power of the positive Hermitian line bundle $L \to \C^m$  is the set of entire
holomorphic functions $f$ so that
\begin{equation} ||f||^2_{L^2(X, L^{\lambda}} = \int_{\C^m} |f(z)|^2 e^{- \lambda \phi(z)} dm(z) < \infty. \end{equation}
It is assume that the curvature form of $\phi$ is strictly positive and uniformly 
bounded above and below. Thus
the real Hessian of $\phi$ is a positive matrix $\rm{Hess}(\phi)(x)$ satisfying
$$C^{-1} |v|^2 \leq \langle \rm{Hess}(\phi)(x) v,v \rangle \leq C |v|^2.$$
Note that such a Kaehler metric is quite different from the usual Bargmann-Fock
type Kaehler metrics such as $|z|^2$, since the latter are invariant under the compact torus action (i.e. are non-compact toric Kaehler manifolds in the standard sense).

Theorem 2.1 of \cite{Chr13} says that if  there exists an open set $U \subset \C^m$ so that, for any $\delta > 0$, there exists a sequence $\lambda_{\nu}
\to \infty$ such that $|B(z,z') \leq e^{- \epsilon \lambda_{\nu}}$ for some $\epsilon > 0$
and for  $(z,z') \in U$, $|z - z'| \geq \delta$, then the Kaehler potential $\phi$ is real analytic on $U$.

 Of course, the joint eigenfunctions $e^{\langle \alpha, x/2+ i y\rangle}$ do not
 lie in $L^2$ since the integrals
 $$ \int_{\R^{2m}}  e^{- \langle \alpha, x \rangle} e^{- \lambda \phi(x)} dx dy  $$
 diverges due to the lack of damping in the $y$ variable.  We may however
 express the Bergman kernel in terms of a generalized orthonormal basis
 of joint eigenfunctions since
  $$ \int_{\R^{2m}}  e^{ \langle (\alpha + \beta), x/2 } e^{i\langle \alpha - \beta, y \rangle} e^{- \lambda \phi(x)} dx dy  = \delta(\alpha - \beta)  \left(\int_{\R^{m}}  e^{ \langle \alpha, x  \rangle} e^{- \lambda \phi(x)} dx\right).$$ 
  
  Analogously to \eqref{QFORM}, we write
  \begin{equation} Q_{\lambda \phi}(\alpha) = \int_{\R^{m}}  e^{ \langle \alpha, x  \rangle} e^{- \lambda \phi(x)} dx.\end{equation}

  \begin{prop} \label{CHRBK} The Bergman kernel $B_{\lambda}(z,w)$ of Christ's metric 
  has the form,
  $$B_{\lambda}(z,w) = \int_{\R^m} e^{ \langle \alpha, (x + x')/2 + i (y - y') \rangle}
  \frac{ d \alpha}{Q_{\lambda \phi}(\alpha) }. $$
  \end{prop}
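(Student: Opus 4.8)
The plan is to mirror the toric computation from Lemma \ref{BNTORIC}, replacing the lattice sum over $\alpha\in kP\cap\Z^m$ by an integral over $\alpha\in\R^m$, and the $\T$-action by the $\R^m_y$-action. First I would observe that the joint eigenfunctions $e_\alpha(z):=e^{\langle\alpha,\,z\rangle}$ (writing $z=x+iy$, so $e_\alpha(z)=e^{\langle\alpha,x\rangle}e^{i\langle\alpha,y\rangle}$, or more precisely the section with local expression $e^{\langle\alpha,\,\cdot\,/2\rangle}$ as in \eqref{CHIALPHA}) diagonalize the reproducing problem: the computation displayed just before the statement shows that distinct $\alpha$'s are ``orthogonal'' in the distributional sense, with
\begin{equation*}
\int_{\R^{2m}} e_\alpha(z)\,\overline{e_\beta(z)}\, e^{-\lambda\phi(x)}\,dx\,dy \;=\; (2\pi)^m\,\delta(\alpha-\beta)\, Q_{\lambda\phi}(\alpha),
\end{equation*}
where $Q_{\lambda\phi}(\alpha)=\int_{\R^m}e^{\langle\alpha,x\rangle}e^{-\lambda\phi(x)}\,dx$. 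Granting this, the family $\{Q_{\lambda\phi}(\alpha)^{-1/2}e_\alpha\}_{\alpha\in\R^m}$ is a (generalized/continuous) orthonormal basis of the Bergman space, and the spectral/Plancherel expansion of the reproducing kernel with respect to it gives exactly
\begin{equation*}
B_\lambda(z,w)=\int_{\R^m} e_\alpha(z)\,\overline{e_\alpha(w)}\,\frac{d\alpha}{Q_{\lambda\phi}(\alpha)}=\int_{\R^m} e^{\langle\alpha,\,(x+x')/2 + i(y-y')\rangle}\,\frac{d\alpha}{Q_{\lambda\phi}(\alpha)},
\end{equation*}
which is the claimed formula.

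To make this rigorous rather than formal, I would proceed as follows. Step 1: use the uniform Hessian bounds $C^{-1}\le\Hess(\phi)\le C$ to show $Q_{\lambda\phi}(\alpha)$ is finite for all $\alpha\in\R^m$ and $\lambda>0$ — indeed $\phi$ is uniformly convex with at most quadratic-type growth controlled below, so $e^{\langle\alpha,x\rangle-\lambda\phi(x)}$ is integrable — and more quantitatively, a Laplace/stationary-phase estimate gives $Q_{\lambda\phi}(\alpha)\asymp \lambda^{-m/2}e^{-\lambda\psi_\lambda(\alpha)}$ where the critical point of $\langle\alpha,x\rangle-\lambda\phi(x)$ is nondegenerate by the Hessian bound; this also yields the integrability in $\alpha$ of $e_\alpha(z)\overline{e_\alpha(w)}/Q_{\lambda\phi}(\alpha)$ for fixed $z,w$, so the right-hand side converges absolutely. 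Step 2: define the candidate kernel $\widetilde B_\lambda(z,w)$ by that integral and verify directly that (a) $z\mapsto\widetilde B_\lambda(z,w)$ is holomorphic (differentiate under the integral — justified by the absolute convergence just established) and lies in the weighted $L^2$ space, and (b) it has the reproducing property $\langle f,\widetilde B_\lambda(\cdot,w)\rangle_{L^2(e^{-\lambda\phi})}=f(w)$ for all $f$ in a dense subspace. For (b), the natural dense subspace is the span of the $e_\alpha$'s integrated against nice (say Schwartz, compactly supported in $\alpha$) densities, $f=\int g(\alpha)e_\alpha\,d\alpha$; on such $f$ the reproducing identity reduces, via the orthogonality relation of Step 1 and Fubini, to the elementary identity $\int g(\alpha)e_\alpha(w)\,d\alpha=f(w)$. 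Step 3: invoke uniqueness of the reproducing kernel of a (possibly non-closed, but one argues with the closure) Hilbert space of holomorphic functions to conclude $B_\lambda=\widetilde B_\lambda$.

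The main obstacle is the non-compactness of the $\R^m_y$-orbit, which is exactly why the eigenfunctions $e_\alpha$ fail to be genuine $L^2$ elements: one is doing a continuous (direct-integral) decomposition rather than the discrete Fourier series available in the toric case, and care is needed that the ``basis'' $\{e_\alpha\}$ is complete — i.e. that no nonzero holomorphic $L^2$ function is orthogonal to every $e_\alpha$. I would handle completeness by noting that for $f$ in the Bergman space the partial Fourier transform $\widehat f(\alpha,x):=\int_{\R^m} f(x+iy)e^{-i\langle\alpha,y\rangle}\,dy$ is, for each $\alpha$, of the form (constant in $\alpha$)$\cdot e^{\langle\alpha,x\rangle}$ after using the Cauchy–Riemann equations (holomorphy forces the $x$-dependence of each Fourier mode to be $e^{\langle\alpha,x\rangle}$), and that vanishing of all the pairings $\langle f,e_\alpha\rangle$ forces $\widehat f\equiv 0$, hence $f\equiv0$ by Fourier inversion; the uniform-convexity bound on $\phi$ is what guarantees these Fourier integrals make sense and that the manipulation is legitimate. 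One should also remark, as the paper implicitly does, that the whole argument is the verbatim analogue of Lemma \ref{BNTORIC} with $kP\cap\Z^m$ replaced by $\R^m$ and $\T$ by $\R^m$, so the reader who accepts the toric case will accept this; the only genuinely new point is the Plancherel bookkeeping for the continuous spectrum.
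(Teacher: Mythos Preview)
Your approach matches the paper's: the paper states the proposition immediately after displaying the distributional orthogonality relation
\[
\int_{\R^{2m}} e^{\langle(\alpha+\beta),x/2\rangle} e^{i\langle\alpha-\beta,y\rangle} e^{-\lambda\phi(x)}\,dx\,dy = \delta(\alpha-\beta)\,Q_{\lambda\phi}(\alpha),
\]
and treats the formula as the formal expansion of the reproducing kernel in the resulting generalized orthonormal basis, in direct analogy with Lemma~\ref{BNTORIC}. No further argument is given in the paper.

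Your proposal therefore goes well beyond what the paper supplies: the three-step programme (finiteness and Laplace asymptotics of $Q_{\lambda\phi}$, construction of the candidate kernel and verification of the reproducing property on a dense subspace, completeness via the partial Fourier transform and Cauchy--Riemann) is a genuine rigorous justification where the paper is content with the formal/heuristic statement. This is all correct in spirit and is exactly the analytic scaffolding one would need to make the claim precise; just be aware that for the purposes of this paper the result is asserted rather than proved, so your level of detail, while valuable, exceeds what is required to match the source.
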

  In the present  terminology, the Bergman kernel is the kernel which
  is holomorphic in $z$, anti-holomorphic in $w$ and which represents
  the \szego kernel with respect to a local frame $e_L^{\lambda}$. The setting
  of the trivial line bundle $L  = \C^m \times \C \to \C^m$ means that the constant
  function $1$ can be used as a frame, but its norm is $e^{- \lambda \phi(z)/2}$. 
  Thus the \szego kernel is
  \begin{equation} \label{Szego} \Pi_{\lambda}(z,w) = 
  B_{\lambda}(z,w)e^{- \lambda \phi(z)/2} e^{- \lambda \phi(w)/2}. \end{equation}
  
  Now let us suppose that $y = y'$ but $x \not= x'$. Then
   $$B_{\lambda}(z,w) = \int_{\R^m} e^{ \langle \alpha, (x + x')/2 \rangle}
  \frac{ d \alpha}{Q_{\lambda \phi}(\alpha) }. $$
  As in the toric case,  we recognize that this is closely related to the diagonal value of the Bergman kernel
  at the point $(\frac{x - x'}{2}, y)$ for any $y$, i.e.
  $$B_{\lambda}((\frac{x + x'}{2}, y),(\frac{x + x'}{2}, y)) = \int_{\R^m} e^{\lambda \langle \alpha, (x + x')/2 \rangle}
  \frac{ d \alpha}{Q_{\lambda \phi}(\alpha) }. $$
  However as above, the off-diagonal \szego kernel does not equal the
  corresponding on-diagonal \szego kernel at the mid-point but differs by the
  metric factor. That is,
  \begin{equation}\begin{array}{lll} \Pi_{\lambda}((x/2 + i y, x'/2 + i y') & = & B_{\lambda}(x/2 + iy,
  x'/2 + i y') e^{- \lambda \phi(z)/2} e^{-\lambda \phi(z')/2} \\ &&\\
  & = & 
   B_{\lambda}((\frac{x + x'}{2}, y),(\frac{x + x'}{2}, y)) e^{- \lambda \phi(\frac{x + x'}{2}, y) }\\&&\\&& \times \left(
   e^{- \lambda \phi(z)/2} e^{-\lambda \phi(z')/2}  e^{ \lambda \phi(\frac{x + x'}{2}, y)} \right) \\&&\\
   & = & 
   \Pi_{\lambda}((\frac{x + x'}{2}, y),(\frac{x + x'}{2}, y))   \left(
   e^{- \lambda \phi(z)/2} e^{-\lambda \phi(z')/2}  e^{ \lambda \phi(\frac{x + x'}{2}, y)} \right)  \\&&\\
   & = & 
   \Pi_{\lambda}((\frac{x + x'}{2}, y),(\frac{x + x'}{2}, y))   \left(
   e^{- \lambda \phi(x)/2} e^{-\lambda \phi(x')/2}  e^{ \lambda \phi(\frac{x + x'}{2})} \right) . \end{array} \end{equation}

  By the Bergman kernel asymptotics, 
   $$\Pi_{\lambda}((\frac{x + x'}{2}, y),(\frac{x + x'}{2}, y)) \sim \lambda^m + O(\lambda^{m-1}). $$
   On the other hand, by convexity of $\phi(x)$,$$ e^{- \lambda \phi(x)/2} e^{-\lambda \phi(x')/2}  e^{ \lambda \phi(\frac{x + x'}{2})}$$
   is exponentially decaying at speed $\lambda$, since
   $$\phi(x)/2 +  \phi(x')/2  \geq \phi(\frac{x + x'}{2}). $$
   
  This argument does not give exponential decay when $x = x'$ and therefore
  is consistent with the results of \cite{Chr13}.

\end{document}